\theoremstyle{plain}                       
\newtheorem{theorem}{Theorem}
\newtheorem{corollary}[theorem]{Corollary} 
\theoremstyle{remark}                      
\newtheorem*{acknow}{Acknowledgement}          
\newcommand\pair[1]{\langle\, #1\,\rangle}
\newcommand\ppair[1]{(\,#1\,)}
\DeclareMathOperator\linsp{span}
\newcommand{\lt}{L}                        
\newcommand{\rt}{R}
\newcommand\col{\colon}
\newcommand\sub{\subseteq}
\newcommand\cstar{\mathrm{C}^*}
\newcommand{\A}{\mathrm{A}}              
\newcommand{\B}{\mathrm{B}}              
\newcommand{\set}[2]{\{\,{\textstyle#1};\,{\textstyle #2}\,\}}
\newcommand{\inv}{^{-1}}                   
\newcommand\conj{\overline}                
\newcommand\lone{\mathrm{L}^1}
\newcommand\ltwo{\mathrm{L}^2}
\newcommand\linfty{\mathrm{L}^\infty}
\newcommand{\vn}{\mathrm{VN}}
\newcommand{\C}{\mathrm{C}}
\newcommand{\G}{\mathbb{G}}
\newcommand{\ot}{\otimes}
\newcommand{\cop}{\Delta}
\newcommand{\id}{\mathrm{id}}
\newcommand{\M}{\mathrm{M}}
\newcommand\dual[1]{\widehat{#1}} 
\newcommand\conv{\star}
\newcommand\T{\mathbb{T}}
\renewcommand\H{\mathbb{H}}
\newcommand\Z{\mathbb{Z}}
\newcommand\complex{\mathbb{C}}
\newcommand\SU{\mathrm{SU}}
\newcommand\U{\mathrm{U}}
\newcommand\SO{\mathrm{SO}}
\begin{document}
\title[Idempotent states]%
      {Idempotent states on locally compact groups and quantum groups}

\author{Pekka Salmi}

\dedicatory{Dedicated to Professor Victor Shulman on the occasion of
  his 65th birthday}

\address{Department of Mathematical Sciences, University of Oulu,
         PL~3000, FI-90014 Oulun yliopisto, Finland}

\email{pekka.salmi@iki.fi}

\subjclass[2010]{Primary 60B15; Secondary 43A05, 43A35, 46L30, 81R50}

\begin{abstract}
This is a short survey on idempotent states on locally compact groups 
and locally compact quantum groups.
The central topic is the relationship between 
idempotent states, subgroups and invariant C*-subalgebras. 
We concentrate on recent results on locally compact quantum groups,
but begin with the classical notion of idempotent probability measure. 
We also consider the `intermediate' case of idempotent 
states in the Fourier--Stieltjes algebra: this is the dual case 
of idempotent probability measures and so an instance of 
idempotent states on a locally compact quantum group.
\end{abstract}

\maketitle
This is a short survey on idempotent states on locally compact groups 
and, more generally, on locally compact quantum groups. 
Idempotent states arise for example as 
limits of random walks (as we shall see in section~\ref{sec:rw})
and as limits in ergodic theorems for random walks
\cite{franz-skalski:ergodic}.  
Idempotent states are also connected to the construction 
of the Haar measure of a compact group:
taking the Ces\`aro limit 
of convolution powers of a probability measure  
gives an idempotent probability measure, 
which is the Haar measure if the original measure is suitably 
chosen. The same process works in the case of compact quantum groups:
this is the construction of Haar state due to Woronowicz 
\cite{woronowicz:compact-matrix-pseudogroups,woronowicz:compact-quantum}. 
Very recently, idempotent states on locally compact quantum 
groups have also turned up in connection with Hopf images 
\cite{banica-franz-skalski:id-states-hopf-images} 
and Poisson boundaries \cite{kalantar-neufang-ruan:poisson}.

As we shall see, idempotent states are inherently related to subgroups. 
However, there is some evidence \emph{against} the preceding claim,
such as Pal's example of an idempotent state on the 
Kac--Paljutkin quantum group that does not arise from the Haar state 
of a subgroup \cite{pal:idempotent-state}. In this paper we shall see
that in fact also Pal's example is associated with a subgroup (in 
a different way), and so perhaps there is still hope to 
connect all idempotent states to subgroups, quotient groups or
combinations of these. 
There are not many new things in this survey article: 
only a cute new proof to a known result 
and the already-mentioned insight to the example of Pal.   
Many of the results, and much more, can be found in the 
recent papers due to (combinations of) 
Franz, Skalski, Tomatsu and the author 
\cite{franz-skalski:idempotent-states,
  franz-skalski:algebraic-idem,franz-skalski-tomatsu:idem,
  salmi:compact-subgroups,
  salmi-skalski:idempotent-states}.

\section{Random walks and idempotent probability measures}
\label{sec:rw}

Every probability measure $\mu$ on a discrete group $G$ 
determines a random walk:
if we start from point $s\in G$, then the probability for 
taking a step to $t\in G$ is 
\[
P(s\mapsto t) = \mu_{ts\inv},
\]
where $\mu_{ts\inv} := \mu(\{ts\inv\})$.
Suppose that we start from the identity $e$ and $X_k$ is 
the random variable denoting the position after $k$ steps.
We can use convolution to describe the random walk:
\begin{align*}
P(X_1 = t) &= \mu_t\\
P(X_2 = t) &= \sum_{s\in G}P(X_1 = s)P(s\mapsto t)  
               = \sum_{s\in G} \mu_s \mu_{ts\inv} = (\mu\conv\mu)_t \\
&\;\;\vdots\\
P(X_k = t) &= (\mu^{\conv k})_t.
\end{align*}
In general the convolution of measures $\mu$ and $\nu$ 
on a locally compact group $G$ is defined by 
\[
\pair{\mu\conv \nu, f} = \iint f(st)\,d\mu(s)\,d\nu(t)
\qquad (f\in \C_0(G));
\]
here, and throughout the paper, we consider measures on $G$ as
functionals on the C*-algebra $\C_0(G)$  
of continuous functions on $G$ vanishing at infinity.
In the discrete case the convolution boils down to 
\[
\mu\conv\nu = \sum_{s,t\in G}  \mu_s\nu_t\delta_{st}
        = \sum_{s\in G}\biggl(\sum_{t\in G} \mu_{st\inv}\nu_{t}\biggr)\delta_s
\]
where $\delta_s$ denotes the Dirac measure at $s$. 

More generally, a probability measure on a locally compact group
determines a random walk on that group. 
From this point of view, we shall see that random walks give rise to 
idempotent probability measures. An \emph{idempotent probabity measure} on
a locally compact group $G$  is a probability measure $\mu$ on $G$ that
is an idempotent under the convolution product:
\[
\mu\conv \mu = \mu.
\]
Now suppose that $G$ is a compact group and 
$\nu$ is a probability measure on $G$. 
If the sequence of convolution powers of $\nu$ converges 
in the weak* topology, then the limit is an idempotent probability
measure, which embodies the limit of the random walk.
The convergence of such a sequence of convolution powers 
is widely studied in probability theory (see for example 
\cite{grenander:probabilities-alg,
hognas-mukherjea:probability-measures-on-semigroups}).

Taking a slightly different approach, consider the Ces\`aro averages
\[
\frac1n \sum_{k=1}^n \nu^{\conv k}.
\]
The sequence of these averages always converges in the weak* topology. 
Morevover, the limit $\mu$ is an idempotent probability measure. 
This gives a useful way to generate idempotent probability measures:
for example, the Haar measure of a compact group may be constructed 
this way. 
(In the case of non-compact groups the situation is more complicated 
and to make sure that the limit is non-zero, some form of 
tightness needs to be assumed for the sequence of convolution powers
of $\nu$.)

\section{Kawada--It\^o theorem} \label{sec:kawada-ito}

Now that we have seen how idempotent probability measures may arise 
in practice, a natural question is how to characterise these measures. 
In the case of locally compact abelian groups,
we may use the Fourier--Stieltjes transform to 
convert an idempotent measure to a characteristic function 
on the dual group. This trivialises the algebraic side of things, but
now the positivity and  the normalisation condition become non-trivial. 
Still, this is a useful approach and, as we shall see in the next
section, leads to a simple characterisation. 
But let us first review the history of the general problem. 

Already in 1940 Kawada and It\^o \cite{kawada-ito:idem}
characterised idempotent probability measures 
on compact groups as the normalised Haar measures of
compact subgroups. It seems that harmonic analysts were 
unaware of this paper, and Wendel \cite{wendel:idem} rediscovered the
result in 1954 (truth be told, Wendel's main result is an interesting new
proof for the existence of Haar measure on a compact group $G$,
using idempotents in the compact semigroup formed by the probability 
measures on $G$). Trying to characterise all idempotent measures on a 
locally compact abelian group, Rudin~\cite{rudin:idem, rudin:measure-algebras}
showed that any idempotent measure is concentrated on
a compact subgroup, thereby extending Wendel's result 
-- or that of Kawada--It\^o -- to locally compact abelian
groups. (The full description  of idempotent measures on locally
compact abelian groups is due to Cohen~\cite{cohen:idempotents}; the
non-abelian case is open.) 
Independently, Pym~\cite{pym:idem} and Loynes~\cite{loynes:idem}
characterised idempotent probability measures on 
locally compact groups (not necessarily abelian)
as the normalised Haar measures of compact subgroups. 
However, since the problem took two separate paths, 
one starting from Kawada--It\^o and another one from Wendel,
it is perhaps not that surprising that the problem was solved 
already in 1954, by Kelley \cite{kelley:averaging}. 
The three approaches, due to Kelley, Pym and Loynes, are all quite
different: Kelley studies operators on $\C_0(G)$, 
Pym idempotent measures on semigroups and Loynes operator-valued
Fourier transform. There are also other generalisations, for example
one due to Parthasarathy \cite{parthasarathy:idem} 
to complete separable metric groups, which need not be locally compact.

\begin{theorem}[Kawada--It\^o]
Let $\mu$ be an idempotent probability measure 
on  a locally compact group $G$.  
Then there is a compact subgroup $H$ of $G$ 
such that $\mu$ is the normalised Haar measure of $H$
\textup{(}considered as a measure on $G$\textup{)}.
\end{theorem}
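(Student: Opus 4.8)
The plan is to set $H=\operatorname{supp}\mu$ and prove in turn that $H$ is a closed subsemigroup of $G$, that $H$ is \emph{compact}, that $H$ is actually a subgroup, and finally that $\mu$ is the normalised Haar measure $m_H$ of $H$. The first step is immediate from the identity $\operatorname{supp}(\mu\conv\nu)=\overline{\operatorname{supp}\mu\cdot\operatorname{supp}\nu}$: idempotency gives $H=\overline{H\cdot H}$, so in particular $H\cdot H\sub H$.

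The main obstacle is the second step, compactness of $H$, and this is where the finiteness of the total mass of $\mu$ has to be used. The algebraic relation $H=\overline{H\cdot H}$ is local in nature and does not by itself rule out a non-compact $H$ (a non-compact group has non-compact closed subsemigroups aplenty); it is the finiteness of $\mu$ that forces the support to stay bounded. My approach would be a tightness argument: given small $\varepsilon>0$, pick a compact $K$ with $\mu(K)>1-\varepsilon$, and use the identities $\mu=\mu^{\conv n}$ to trap $H$ inside a fixed compact set — intuitively, the random walk with law $\mu$ has constant total mass and so cannot drift off to infinity. (This is also where Kelley's study of the idempotent positive contraction $f\mapsto\mu\conv f$ on $\C_0(G)$, Pym's analysis of the semigroup of measures, and Loynes' operator-valued Fourier transform each do their real work, packaging the required compactness in a different way.)

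Granting that $H$ is compact, the third step is short. A nonempty compact semigroup contains an idempotent, and a group has no idempotent but $e$, so $e\in H$. For $s\in H$, the set $\overline{\{s^n:n\geq1\}}$ is a closed subset of $H$, hence a compact subsemigroup, and so contains $e$; unless $s=e$ (a trivial case) this produces a net $s^{n_\alpha}\to e$ with $n_\alpha\geq2$, whence $s^{n_\alpha-1}\to s\inv$ with $s^{n_\alpha-1}\in H$, so $s\inv\in H$ since $H$ is closed. Thus $H$ is a compact subgroup and carries its normalised Haar measure $m_H$.

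For the last step I would use Peter--Weyl. Being supported on the compact group $H$, the measure $\mu$ is determined by the operators $\dual\mu(\pi)=\int_H\pi(s)\,d\mu(s)$, with $\pi$ ranging over the irreducible unitary representations of $H$. Idempotency gives $\dual\mu(\pi)^2=\dual\mu(\pi)$, while $\|\dual\mu(\pi)\|\leq1$ because each $\pi(s)$ is unitary, so $\dual\mu(\pi)$ is an orthogonal projection. Its range is precisely the space of $\pi$-invariant vectors: if $\dual\mu(\pi)v=v$, then equality throughout $\|v\|=\|\int_H\pi(s)v\,d\mu(s)\|\leq\int_H\|\pi(s)v\|\,d\mu(s)=\|v\|$, together with strict convexity of the Hilbert-space norm, forces $\pi(s)v=v$ for $\mu$-almost every $s$, hence for every $s\in H$ because $\operatorname{supp}\mu=H$. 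But the projection onto the $\pi$-invariants is exactly $\dual{m_H}(\pi)$, so $\dual\mu(\pi)=\dual{m_H}(\pi)$ for every $\pi$, and therefore $\mu=m_H$.
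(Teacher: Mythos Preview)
Your outline is the classical route and is correct in its logic, with the compactness of $H=\operatorname{supp}\mu$ honestly flagged as the one step you do not fully prove. Steps~3 and~4 are fine as written; in particular the Peter--Weyl argument identifying $\dual\mu(\pi)$ with the orthogonal projection onto the $\pi$-invariants is clean, and the equality case you invoke does give $\pi(s)v=v$ $\mu$-a.e.\ (take the inner product with $v$ and use $\operatorname{Re}\ppair{\pi(s)v\mid v}\le\|v\|^2$).

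The paper, however, does \emph{not} argue this way at all. It never analyses $\operatorname{supp}\mu$ directly; instead it obtains Kawada--It\^o as a corollary of the quantum-group characterisation of Haar idempotents (Theorem~5 in section~\ref{sec:inv-id}): an idempotent state $\omega$ on a coamenable $\G$ is a Haar idempotent if and only if the left ideal $N_\omega=\set{a\in\C_0(\G)}{\omega(a^*a)=0}$ is two-sided. When $\C_0(\G)=\C_0(G)$ is commutative this condition is automatic, so every idempotent probability measure is the pull-back of the Haar state of a compact quantum subgroup, which in the commutative case is just a compact subgroup. Thus the paper's proof is one line once the quantum machinery (left-invariant C*-subalgebras, the conditional expectation $\rt_\omega$, and the $N_\omega$ criterion) is in place; the compactness of the subgroup falls out of the general theory rather than from a tightness argument on $G$.

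The trade-off is clear: your approach is self-contained and stays inside classical harmonic analysis, but needs an ad hoc compactness argument. The paper's approach costs the entire apparatus of sections~\ref{sec:inv}--\ref{sec:inv-id}, but then the classical theorem becomes a triviality and the same framework simultaneously handles the co-commutative case (idempotent states in $\B(G)$) and genuine quantum examples.
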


The Kawada--It\^o theorem gives a procedure to 
construct the Haar measure of a compact group. 
Start with a probability measure $\nu$ whose support 
generates the compact group $G$. As mentioned in the preceding 
section, the Ces\`aro averages of convolution powers of $\nu$ 
converge in the weak* topology to an idempotent probability measure. 
It follows from the Kawada--It\^o theorem, 
that the limit is the normalised Haar measure 
of $G$ due to the choice of $\nu$. 

As another example, 
consider the circle group $\T$ and the Dirac measure $\delta_z$ at 
some $z\in\T$. Then there is an idempotent probability measure $\mu$ 
such that $\frac1n \sum_{k=1}^n \delta_z^{*k}\to \mu$ weak*. 
If $z$ is a rational multiple of $\pi$, then $\mu$ 
is the counting measure of the finite subgroup of $\T$
generated by $z$.
On the other hand, if $z$ is an irrational multiple of $\pi$,
then $\mu$ is the normalised Lebesgue measure on $\T$.
The latter statement amounts to the Weyl equidistribution theorem.

\section{Idempotent states in the Fourier--Stieltjes algebra}

Let $G$ be a locally compact group. 
The \emph{Fourier--Stieltjes algebra} $\B(G)$  is the collection of
all coefficient functions $\ppair{\pi(\cdot)\xi\mid \zeta}$ 
of strongly continuous unitary representations $\pi$ of $G$ 
(here $\xi$ and $\zeta$ are elements of the representation space
$\mathcal{H}_\pi$).  
The Fourier--Stieltjes algebra is the dual space of 
the universal group C*-algebra $\cstar(G)$ (this determines the 
norm of $\B(G)$), and $\B(G)$ is a Banach algebra under 
the pointwise multiplication of functions.
If $G$ is abelian, $\B(G)$ is isomorphic, via Fourier--Stieltjes 
transform, to the measure algebra $\M(\dual G)$ 
of Radon measures on the dual group $\dual G$. 

An \emph{idempotent state} in $\B(G)$ is 
a state on $\cstar(G)$ that is an idempotent:
\[
u^2 = u.
\]
That $u$ is a state means that $u$ is a positive definite function
with $u(e) = 1$, where $e$ denotes the identity of $G$.
In the case of abelian $G$, the Fourier--Stieltjes transform 
takes an idempotent probability measure on $\dual G$ 
to an idempotent state in $\B(G)$. 
This explains why the characterisation of idempotent
states in $\B(G)$ may be viewed as the dual version 
of the Kawada--It\^o theorem. In fact, many early 
studies on idempotents in $\M(G)$, with $G$ abelian, 
used the Fourier--Stieltjes transform to translate the 
problem to the dual setting.  

Continuing from the work of Cohen \cite{cohen:idempotents},
mentioned in the previous section, Host~\cite{host:idempotents}
characterised all idempotents in $\B(G)$ as characteristic functions
of sets in the open coset ring of $G$. 
However, his characterisation does not immediately lead 
to the following characterisation of idempotent states,
which is due to Ilie and Spronk~\cite{ilie-spronk:cb-homos}.
The short proof presented here is new (Ilie and Spronk obtained
their result as a corollary of a more general characterisation 
of contractive idempotents).

\begin{theorem}
Every idempotent state in $\B(G)$ is 
the characteristic function of an open subgroup of $G$.
\end{theorem}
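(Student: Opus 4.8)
The plan is to identify the underlying subgroup directly as the stabiliser of a vector in a unitary representation. Set $E := \set{g \in G}{u(g) = 1}$. Since the idempotency $u^2 = u$ takes place in $\B(G)$ under \emph{pointwise} multiplication, it says precisely that $u(g)^2 = u(g)$ for every $g \in G$, so $u$ takes only the values $0$ and $1$; equivalently $u = 1_E$, the indicator function of $E$. Every element of $\B(G)$ is a coefficient function of a strongly continuous unitary representation, hence continuous on $G$, so $E = u\inv(\{1\})$ is clopen, and $e \in E$ because $u(e) = 1$. Everything therefore reduces to checking that $E$ is closed under products and inverses.

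For that I would pass to a representation realising $u$. Because $u$ is positive definite and normalised, the GNS construction (or, directly, the description of $\B(G)$ via coefficient functions) yields a strongly continuous unitary representation $\pi$ of $G$ on a Hilbert space $\mathcal H$ together with a unit vector $\xi \in \mathcal H$ such that $u(g) = \ppair{\pi(g)\xi \mid \xi}$ for all $g \in G$. The decisive observation is that $|u(g)| = |\ppair{\pi(g)\xi \mid \xi}| \le \|\pi(g)\xi\|\,\|\xi\| = 1$ for every $g$, with equality when $u(g) = 1$; the equality case of the Cauchy--Schwarz inequality then forces $\pi(g)\xi$ to be a scalar multiple of $\xi$, and the scalar must be $1$ since $\ppair{\pi(g)\xi \mid \xi} = 1$, i.e.\ $\pi(g)\xi = \xi$. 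Conversely $\pi(g)\xi = \xi$ plainly gives $u(g) = 1$. Hence
\[
E = \set{g \in G}{\pi(g)\xi = \xi}.
\]

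The right-hand side is the stabiliser of the vector $\xi$ under the homomorphism $\pi$, and a point stabiliser is automatically a subgroup: $\pi(e)\xi = \xi$, while $\pi(g)\xi = \pi(h)\xi = \xi$ imply $\pi(gh)\xi = \pi(g)\pi(h)\xi = \xi$ and $\pi(g\inv)\xi = \pi(g)\inv\xi = \xi$. Therefore $E$ is an open subgroup of $G$ and $u = 1_E$, as claimed.

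There is no real obstacle here; the only points that deserve a moment's attention are that the idempotency is pointwise (which is what makes $u$ two-valued) and that continuity of the functions in $\B(G)$ upgrades ``subgroup'' to ``open subgroup''. The one mildly clever step is recognising that the equality case of Cauchy--Schwarz turns the level set $u\inv(\{1\})$ into a vector stabiliser, which is a subgroup for free; this is what keeps the proof short, bypassing Host's description of all idempotents in $\B(G)$ and the general machinery of contractive idempotents. One could instead avoid the GNS construction and argue purely by positive-definiteness, testing the defining positivity of $u$ on the three-element set $\{e, g, gh\}$ for $g, h \in E$ to deduce $u(gh) = 1$, but the representation-theoretic formulation is cleaner.
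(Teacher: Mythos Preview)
Your proof is correct and complete. It differs from the paper's argument in one essential respect: after observing that $u$ is the characteristic function of a clopen set, the paper works inside $\cstar(G)$ and invokes Choi's theorem on multiplicative domains to show that each $\varpi(s)$ with $s\in H$ lies in the multiplicative domain of the state $u$, whence $u(ts)=u(t)u(s)$ for all $t$. You instead pass to the GNS triple $(\pi,\mathcal H,\xi)$ and use the equality case of Cauchy--Schwarz to identify $E$ with the stabiliser of $\xi$, which is a subgroup for free. Your route is more elementary, since it avoids the completely-positive-map machinery and needs nothing beyond basic Hilbert-space geometry; the paper's route is more operator-algebraic in spirit and yields the slightly stronger multiplicativity statement $u(ts)=u(t)u(s)$ for \emph{all} $t\in G$, though that extra strength is not needed for the conclusion. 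At bottom the two arguments are cousins---Choi's theorem is itself proved via a Cauchy--Schwarz inequality for completely positive maps---but your direct stabiliser argument is the shorter path to this particular result.
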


\begin{proof}
First of all, every idempotent in $\B(G)$ is a characteristic function 
of some open (and closed) set $H$, because $\B(G)$ consists of 
continuous functions. 
Denote the universal representation of $G$ by $\varpi$. 
If $s\in H$, then 
\[
u(s\inv) = \pair{u,\varpi(s\inv)} 
      = \pair{u,\varpi(s)^*} = \conj{\pair{u,\varpi(s)}} = 1,
\]
so $s\inv$ is also in $H$. Moreover,
\[
\pair{u,\varpi(s)^*\varpi(s)} = u(e) = 1,
\]
because $u$ is a state, and
\[
\pair{u,\varpi(s)^*}\pair{u,\varpi(s)} = 1\cdot 1 = 1. 
\]
It follows from Choi's theorem on multiplicative domains
(see \cite[Theorem 3.19]{paulsen:cb-maps}) 
that $u$ is multiplicative at $\varpi(s)$. So for every $t\in G$,
\[
u(ts) = u(t)u(s)
\]
which implies that $H$ is closed under multiplication.
Hence $H$ is an open subgroup and $u = 1_H$.
\end{proof}

The fact that compact subgroups in the Kawada--It\^o theorem 
have changed to open ones in the preceding result
reflects subgroup duality.
Suppose that $G$ is abelian and $H$ is a closed subgroup 
of $G$. Then the continuous characters on $G$ that are constant $1$ on
$H$ form a closed subgroup $H^\perp$ of $\dual G$.
Now $H$ is compact if and only if $H^\perp$ is open, and vice versa. 
Moreover, the Fourier--Stieltjes transform maps a measure on $G$
supported by $H$ to a function on $\dual G$ 
that is constant on the cosets of $H^\perp$.

\section{Locally compact quantum groups}

Locally compact quantum groups provide a 
natural context to discuss the results in the previous 
sections in a unified manner. We shall walk through the definition
due to Kustermans and Vaes \cite{kustermans-vaes:lcqg}.
Let $\G$ denote a locally compact quantum group.
This means that we have a C*-algebra $\C_0(\G)$,
a non-degenerate $*$-homomorphism 
$\cop\col\C_0(\G)\to \M(\C_0(\G)\ot \C_0(\G))$
(where the tensor product is the minimal C*-algebraic tensor product 
and $\M({}\cdot{})$ denotes the multiplier algebra) such that 
\[
(\id\otimes \cop)\cop = (\cop\otimes \id)\cop 
   \qquad\text{(coassociativity)};
\]
and     
\[
\overline{\linsp}\,\cop(\C_0(\G))(\C_0(\G)\otimes 1) = 
     \overline{\linsp}\,\cop(\C_0(\G))(1\otimes \C_0(\G)) =
     \C_0(\G)\otimes \C_0(\G).
\]
The map $\cop$ is called the \emph{comultiplication} of $\G$.
We also need to assume that there exist
left and right Haar weights on $\C_0(\G)$, denoted by $\phi$ and 
$\psi$, respectively. These are so-called KMS-weights, which are
densely defined,  faithful and lower semicontinuous. The important
invariance  properties, as with Haar measures, are that 
\[
\phi\bigl((\omega\otimes \id)\cop(a)\bigr) = \omega(1)\phi(a)
\]
and
\[
\psi\bigl((\id\otimes \omega)\cop(b)\bigr) = \omega(1)\psi(b)
\]
whenever $\omega\in \C_0(\G)^*_+$ 
and $a,b\in \C_0(\G)_+$ are such that $\phi(a)<\infty$
and $\psi(b)<\infty$.
So a locally compact quantum group is given by a C*-algebra
that has a suitable comultiplication and left and right 
Haar weights. It is convenient to use the suggestive notation 
$\C_0(\G)$ for the C*-algebra because in the commutative case 
the C*-algebra \emph{is} $\C_0(G)$ for some locally compact group $G$.
In this case the comultiplication is given by 
dualised group multiplication:
\[
\cop(f)(s,t) = f(st)\qquad (f\in \C_0(G), s,t\in G). 
\]
Note that $\cop(f)\in \C_b(G\times G) = \M(\C_0(G)\otimes \C_0(G))$
but $\cop(f)\notin \C_0(G)\otimes \C_0(G)$ unless $G$ is 
compact or $f=0$.
Of course the left and right Haar weights are given
by integration against the left and right Haar measures, respectively.
Whenever $\G$ is a locally compact quantum group
such that $\C_0(\G)$ is commutative, it is of this form.

Next we consider the dual of the commutative case, which 
is known as the co-commutative case. 
Let $\lambda$ be the left regular representation of $G$.
Then the reduced group C*-algebra $\cstar_r(G)$ is generated by 
$\lambda(\lone(G))$ in $\B(\ltwo(G))$.
(Hopefully the reader is not confused by the two uses of `$\B$'
as both the Fourier--Stieltjes algebra and 
the algebra of bounded operators; 
the distinction should be clear from the context.)
We define a comultiplication on $\cstar_r(G)$ by putting
\[
\cop(\lambda(s)) = \lambda(s)\otimes \lambda(s)\qquad(s\in G).
\]
Note that actually $\lambda(s)$ is in $\M(\C_r^*(G))$ 
but the above does define a unique comultiplication 
on $\cstar_r(G)$, because the linear span of $\lambda(G)$ is 
strictly dense in $\M(\C_r^*(G))$. 
In this case the left and right Haar weights coincide 
and are the so-called Plancherel weight. 
The construction of this weight uses Tomita--Takesaki theory
\cite[section~VII.3]{takesaki:vol2} 
(for discrete $G$, $\phi(a) = \ppair{a\delta_e\mid\delta_e}$
is the usual tracial state).

Both these examples may be considered as Kac algebras 
\cite{enock-schwartz:kac}. Every Kac algebra 
determines a locally compact quantum group, so the 
latter notion is more general. For an example of a locally compact 
quantum group that is not a Kac algebra, see section~\ref{sec:SU},
which includes the description of the quantum deformation of $\SU(2)$
defined by Woronowicz. For a more thorough introduction
to locally compact quantum groups, see for example the 
survey by Kustermans and Tuset \cite{kus-tuset:survey1,kus-tuset:survey2} 
or the book by Timmermann \cite{timmermann:quantum-groups}.

From now on we shall concentrate on locally compact quantum groups $\G$ 
that are \emph{coamenable}. That means
that there is a state $\epsilon$ on $\C_0(\G)$, called the
\emph{counit} of $\G$, such that 
\[
(\id\ot\epsilon)\cop(a) = (\epsilon\ot\id)\cop(a) = a 
\]
for every $a\in\C_0(\G)$. In the commutative case coamenability 
is a vacuous condition (every commutative quantum group is coamenable),
but a co-commutative quantum group $\G = \dual G$ is coamenable 
if and only if the locally compact group $G$ is amenable.

\section{Classical cases as instances of idempotent states 
         on locally compact quantum groups} 

The notion of idempotent state from the two classical cases 
-- idempotent probability measures on groups 
and idempotent states in the Fourier--Stieltjes algebra -- 
is easily generalised to the language of locally compact quantum
groups. The dual space of the C*-algebra $\C_0(\G)$ carries 
a natural Banach algebra structure: the multiplication is defined 
by 
\[
\omega\conv \sigma(a) = (\omega\ot\sigma)\cop(a)\qquad
           (\omega,\sigma\in \C_0(\G)^*, a\in \C_0(\G)).
\]
An \emph{idempotent state} on a locally compact quantum group $\G$ 
is a state $\omega$ on the C*-algebra $\C_0(\G)$ that is an 
idempotent under the product defined above: 
$\omega\conv\omega = \omega$. 

A much more difficult task than the definition is to unify 
the results from the classical cases to general results on 
locally compact quantum groups. Indeed, it is perhaps 
not even possible to do so. To even bring forth this discussion we
need some  further terminology. 

A locally compact quantum group $\H$ is \emph{compact}
if $\C_0(\H)$ is unital, in which case we write $\C(\H)$ for 
$\C_0(\H)$. A \emph{compact quantum subgroup} of a coamenable
locally compact quantum group $\G$ is a compact quantum group $\H$ 
such that there exists a surjective $*$-homomorphism 
\[
\pi\col \C_0(\G)\to \C(\H)\qquad (\pi\ot \pi)\cop_\G = \cop_\H\pi.
\]
(The reader is warned that there are other definitions of closed
quantum subgroup
\cite{vaes:imprimitivity,vaes-vainerman:low-dimensional}, and it is
not clear whether they are all equivalent. 
In our situation, all the definitions coincide as they do in 
many other cases; see \cite{dkss:subgroups}.)
A compact quantum subgroup of $\G$ always gives rise to an 
idempotent state on $\G$. Indeed, when a locally compact quantum group $\H$
is compact, the left and right Haar weights are actually bounded
functionals and coincide. By normalisation, there exists 
a unique state -- the \emph{Haar state} -- $\phi_\H$ on $\C(\H)$ 
that is both left and right invariant.
Using the subgroup morphism $\pi$, we may pull back $\phi_\H$ 
to obtain an idempotent state $\omega = \phi_\H\pi$ on $\G$. 
Obviously $\omega$ is a state and it is an idempotent 
due to invariance of $\phi_\H$:
\begin{align*}
\omega\conv\omega &= \bigl((\phi_\H\pi)\otimes(\phi_\H\pi)\bigr)
                                                           \cop_\G
              =\bigl(\phi_\H\otimes\phi_\H)(\pi\otimes\pi)\cop_\G\\
              &= (\phi_\H\otimes\phi_\H)\cop_\H\pi
               = \phi_\H(1)\phi_\H\pi = \omega.
\end{align*}
It should be noted that in the case of compact quantum groups,
the existence of Haar state follows from the 
other axioms as shown by 
Woronowicz~\cite{woronowicz:compact-matrix-pseudogroups,
woronowicz:compact-quantum}. 
Indeed, this may be done with a similar process of
using Ces\`aro averages as mentioned after the Kawada--It\^o theorem:
the Ces\`aro averages of convolution powers of a faithful state
converge to the Haar state (however, the resulting Haar state is not
necessarily faithful). 
The assumption that there is a faithful state on the C*-algebra,
which is true in the separable case,
may be dropped, as shown by Van Daele \cite{vandaele:haar-compact}.

Now it is possible to at least formulate the statement of 
the Kawada--It\^o theorem: every idempotent 
state on a locally compact quantum group $\G$ is 
a \emph{Haar idempotent}, that is,
of the form $\omega = \phi_\H\pi$  where $\phi_\H$ is 
the Haar state of a compact quantum subgroup $\H$ of $\G$
and $\pi\col  \C_0(\G)\to \C(\H)$ is the associated morphism.
The problem is that this statement is false. It is, moreover, 
easily seen to be false. Let $G$ be an amenable locally compact group
with a non-normal open subgroup $H$. Then $1_H$ is an 
idempotent state on $\C_r^*(G)$. If $1_H$ were a pullback of 
the Haar state of a compact quantum subgroup of $\C_r^*(G)$,
then the compact quantum subgroup would necessarily be 
of the form $\C_r^*(G/H)$. But $H$ not being normal, this is not
possible (see \cite[Theorem~6.2]{franz-skalski:algebraic-idem} for the 
finite case and \cite[section~7]{salmi:compact-subgroups} 
for a related discussion on left invariant C*-subalgebras 
in $\C_r^*(G)$). 

Although this example, obtainable with finite groups, certainly 
seems to be the most straightforward counterexample of 
the Kawada--It\^ o theorem for quantum groups, it was not the 
first one. The first counterexample is due to Pal 
\cite{pal:idempotent-state} and it comes from a genuine quantum group:
the Kac--Paljutkin quantum group. We shall next describe this 
quantum group  and Pal's example as well as provide a new insight 
to this example.

\section{Pal's counterexample} 

The underlying C*-algebra of the Kac--Paljutkin quantum group $\G$ is 
\[
\complex\oplus\complex\oplus\complex\oplus\complex\oplus \M_2(\complex),
\] 
the basis of which is given by the vectors 
\[
e_k = \delta_{1,k}\oplus\delta_{2,k}\oplus\delta_{3,k}\oplus\delta_{4,k}\oplus
\begin{pmatrix}
\delta_{5,k} & \delta_{8,k}\\
\delta_{7,k} & \delta_{6,k} 
\end{pmatrix}
\]
$k = 1, 2, \ldots, 8$. The comultiplication of $\G$ is defined by
\begin{align*}
\cop(e_1)&= e_1\ot e_1 + e_2\ot e_2 + e_3\ot e_3 + e_4\ot e_4 \\
   &\quad+\frac12 (e_5\ot e_5 + e_6\ot e_6 + e_7\ot e_7 + e_8\ot e_8)\\
\cop(e_2)&= e_1\ot e_2 + e_2\ot e_1 + e_3\ot e_4 + e_4\ot e_3 \\
   &\quad+\frac12 (e_5\ot e_6 + e_6\ot e_5 + i e_7\ot e_8 - i e_8\ot e_7)\\
\cop(e_3)&= e_1\ot e_3 + e_3\ot e_1 + e_2\ot e_4 + e_4\ot e_2 \\
   &\quad+\frac12 (e_5\ot e_6 + e_6\ot e_5 - i e_7\ot e_8 + i e_8\ot e_7)\\
\cop(e_4)&= e_1\ot e_4 + e_4\ot e_1 + e_2\ot e_3 + e_3\ot e_2 \\
   &\quad+\frac12 (e_5\ot e_5 + e_6\ot e_6 - e_7\ot e_7 - e_8\ot
e_8) \displaybreak[0]\\ 
\cop(e_5)&= e_1\ot e_5 + e_5\ot e_1 + e_2\ot e_6 + e_6\ot e_2 \\
   &\quad+ e_3\ot e_6 + e_6\ot e_3 + e_4\ot e_5 + e_5\ot e_4\\
\cop(e_6)&= e_1\ot e_6 + e_6\ot e_1 + e_2\ot e_5 + e_5\ot e_2 \\
   &\quad+ e_3\ot e_5 + e_5\ot e_3 + e_4\ot e_6 + e_6\ot e_4\\
\cop(e_7)&= e_1\ot e_7 + e_7\ot e_1 -ie_2\ot e_8 +ie_8\ot e_2 \\
   &\quad+ ie_3\ot e_8 -ie_8\ot e_3 - e_4\ot e_7 - e_7\ot e_4\\
\cop(e_8)&= e_1\ot e_8 + e_8\ot e_1 +ie_2\ot e_7 -ie_7\ot e_2 \\
   &\quad- ie_3\ot e_7 +ie_7\ot e_3 - e_4\ot e_8 - e_8\ot e_4.
\end{align*}
Pal's idempotent state is defined by 
\[
\omega\biggl(\sum_{k=1}^8 \alpha_k e_k\biggr) = 
\frac14 \alpha_1 + \frac14 \alpha_4 + \frac12 \alpha_6. 
\]
As we shall see in section~\ref{sec:inv-id}, 
we can always associate a certain C*-subalgebra to an 
idempotent state. For Pal's idempotent state $\omega$,
the associated C*-subalgebra $(\id\ot \omega)\cop(\C(\G))$ 
is spanned by the elements 
\[
a = e_1 + e_2 + e_3 + e_4 + e_5 + e_6
\quad\text{and}\quad
b = e_1 - e_2 - e_3 + e_4 - e_5 + e_6.
\]
Moreover, one can calculate that 
\begin{equation}\label{eq:cop}
\cop(a) = a\ot a \quad\text{and}\quad
\cop(b) = b\ot b.
\end{equation} 
Now consider the quantum group $\dual\Z_2$ given by the group
C*-algebra $\C^*(\Z_2)$ (of course $\Z_2\cong \dual\Z_2$ but 
the chosen viewpoint suits us better).
Then $\C^*(\Z_2)$ is spanned by $\lambda(0)$ 
and $\lambda(1)$, where $\lambda$ denotes the left regular
representation of $\Z_2$. Define $\pi\col \C^*(\Z_2)\to \C(\G)$ by 
$\pi(\lambda(0)) = a$ and $\pi(\lambda(1)) = b$. 
By \eqref{eq:cop}, we see that $\pi$ preserves the quantum group
structure of $\dual\Z_2$. 
There is also a conditional expectation $E$ onto $\pi(\C^*(\Z_2))$ 
defined by 
\begin{gather*}
E(e_1) = E(e_4) = \frac18(a+b) \qquad E(e_2) = E(e_3) = \frac18(a- b) \\
E(e_5) = \frac14 (a-b) \qquad E(e_6) = \frac14 (a+b) \qquad E(e_7) = E(e_8) = 0.
\end{gather*}
The counit $\epsilon_{\dual\Z_2}$ of $\dual\Z_2$ is the constant function $1$ 
(considered as an element of $\B(\Z_2)$).
Finally, note that $\omega = \epsilon_{\dual\Z_2}\circ \pi\inv\circ E$.
What this shows is that Pal's idempotent state is 
of the similar form as the idempotent states $1_H$ on group C*-algebras:
$1_H = \epsilon_{\dual H} \circ \pi\inv \circ E$ 
where $\epsilon_{\dual H}$ is the counit of $\dual H$ (i.e.\ constant
$1$ on $H$),  $\pi\col \C_r^*(H)\to \C_r^*(G)$ is the natural
embedding (i.e.\ zero extension), and $E\col \C_r^*(G)\to
\pi(\C_r^*(H))$ is the natural conditional expectation
(i.e.\ restriction to $H$). 
So although Pal's idempotent is not like the idempotent states in the
commutative case (i.e.\ not a Haar idempotent),
it is similar to the idempotent states in the co-commutative case. 
Thus it is associated with a subgroup but in a different way.

These examples of idempotent states that are not Haar idempotents
show that a new approach is needed for general locally compact 
quantum groups. In section~\ref{sec:inv} we consider 
another notion, that of  left invariant C*-subalgebras,
that is closely tied with idempotent states as we shall see.
There is also the approach of Franz and Skalski,
who show in \cite{franz-skalski:algebraic-idem} 
that every idempotent state on a finite quantum group
arises from the Haar state of a so-called quantum
sub\emph{hyper}group. 
On a related note, Franz and Skalski \cite{franz-skalski:idempotent-states}
also show that idempotent states on a finite quantum group
correspond to quantum pre-subgroups in the sense of
\cite{baaj-blanchard-skandalis}.
In all these approaches one associates idempotent states 
with structures more general than subgroups, and that is what we shall 
do with left invariant C*-subalgebras in section~\ref{sec:inv-id}.

\section{Positive examples from deformation quantum groups}
\label{sec:SU}

In this section we shall consider idempotent states on 
some important examples of compact quantum groups,
in particular on the quantum deformation of $\SU(2)$
introduced by  Woronowicz \cite{woronowicz:SUq,
woronowicz:compact-matrix-pseudogroups}.
It turns out that on these deformations of classical groups,
$\SU_q(2)$, $\U_q(2)$ and $\SO_q(3)$, all idempotent states are Haar
idempotents.  The results in this section are due to 
Franz, Skalski and Tomatsu \cite{franz-skalski-tomatsu:idem}.

Define $\C(\SU_q(2))$ as the universal unital C*-algebra generated by
elements $a$ and $c$ such that 
\begin{gather*}
\begin{pmatrix}
a & -qc^*\\
c & a^* 
\end{pmatrix}
\end{gather*}
is formally a unitary matrix (a $2\times 2$ matrix with entries 
in $\C(\SU_q(2))$). The comultiplication  of $\SU_q(2)$ is determined by 
the identity
\[
\cop\begin{pmatrix}
a & -qc^*\\
c & a^* 
\end{pmatrix} =
\begin{pmatrix}
a & -qc^*\\
c & a^* 
\end{pmatrix} \otimes
\begin{pmatrix}
a & -qc^*\\
c & a^* 
\end{pmatrix}.
\]
This identity is to be read as follows: on the left-hand side 
we apply $\cop$ to each entry and on the right-hand side 
we take a formal matrix multiplication where we use the tensor 
product when `multiplying' entries; then we just equate the entries
of the two $2\times 2$ matrices. 
Although the relations given above fully determine the structure 
of $\SU_q(2)$, to  prove 
that we actually get a compact quantum group takes some work. 
Recall however that the existence of the Haar state follows from the
general theory of compact quantum groups.

Using the representation theory of $\SU_q(2)$,
Franz, Skalski and Tomatsu \cite{franz-skalski-tomatsu:idem}
calculated all the idempotent states 
on  $\SU_q(2)$ for $q \in (-1,0)\cup(0,1)$.
It turns out that these are all Haar idempotents. 
Namely, the idempotent states on $\SU_q(2)$ 
are the Haar state and the Haar idempotents coming from 
the subgroups $\T$ and $\Z_n$, $1\le n < \infty$.  
We see that $\T$ is a subgroup of $\SU_q(2)$ 
by mapping the generator $a$ to the generator $z$ of $\C(\T)$ 
and $c$ to $0$. Moreover, $\Z_n$'s are subgroups of $\T$. 
Already Podle\'s \cite{podles:SU-subgroups} showed 
that these are all the closed quantum subgroups of $\SU_q(2)$. 
This result follows also from \cite{franz-skalski-tomatsu:idem},
but of course it takes more work to show that all idempotent states
actually arise from these subgroups.
Franz, Skalski and Tomatsu also give the complete list of idempotent 
states for the related deformation quantum groups $\U_q(2)$ and 
$\SO_q(3)$. Also in these cases all idempotent states are Haar
idempotents.

\section{Left invariant C*-subalgebras} \label{sec:inv}

In this section we shall consider another notion related 
to idempotent states besides subgroups. 
The notion is that of left invariant C*-subalgebra
(here we could use alternative terminology 
and call these coideals or homogeneous spaces).

Let $\G$ be a coamenable locally compact quantum group.
For $\omega\in\C_0(\G)^*$, define the left and right convolution
operators on $\C_0(\G)$ by
\[
\begin{split}
\lt_\omega(a) &= (\omega\ot\id)\cop(a) \\
\rt_\omega(a) &= (\id\ot \omega)\cop(a) 
\end{split}
\qquad (a\in \C_0(\G)).
\]
A C*-subalgebra $X \sub \C_0(\G)$ is said to be \emph{left invariant}
if $\lt_{\omega}(X) \sub X$ for all $\omega \in \C_0(\G)^*$. 
A nondegenerate C*-subalgebra $X$ of $\C_0(\G)$ 
is left invariant if and only if $\cop\col X \to \M(\C_0(\G)\ot X)$.
(A C*-subalgebra is nondegenerate if it contains a bounded approximate 
identity for the ambient C*-algebra.)

Consider the commutative case when $G$ is a locally compact group.
Then a C*-subalgebra $X$ of $\C_0(G)$ is left 
invariant if and only if it is left translation invariant;
that is, the function $\lt_s f(t) = f(st)$ is in $X$ 
whenever $f\in X$ and $s\in G$. 
Lau and Losert \cite{lau-losert:complemented} 
have characterised left invariant C*-subalgebras 
of $\C_0(G)$: a C*-subalgebra $X\sub \C_0(G)$ 
is left invariant if and only if 
there is a compact subgroup $H$ of $G$ such that 
$X$ consists of all the functions in $\C_0(G)$ 
that are constant on right cosets of $H$.
The latter statement means that 
$X$ is $*$-isomorphic to $\C_0(G/H)$.
Earlier, de~Leeuw and Mirkil \cite{deleeuw-mirkil} 
gave this characterisation 
in the case of locally compact \emph{abelian} groups.
Moreover, Takesaki and Tatsuuma \cite{takesaki-tatsuuma:duality-subgroups}
produced several related results, 
characterising closed (left) invariant self-adjoint subalgebras of 
$\linfty(G)$, the Fourier algebra $\A(G)$, the group von Neumann algebra
$\vn(G)$ and the $\lone$ group algebra
(here the meaning of `closed' depends on the context:
with $\A(G)$ and $\lone(G)$ it means norm-closed and 
with $\linfty(G)$ and $\vn(G)$ it means weak*-closed).
The dual version of the Lau--Losert characterisation 
for a locally compact amenable group $G$ is given in
\cite{salmi:compact-subgroups}: 
a C*-subalgebra $X\sub \cstar_r(G)$ is invariant if and only if 
$X \cong \cstar_r(H)$ for some open subgroup $H$.
One can also consider strictly closed left invariant C*-subalgebras 
of $\C_b(G)$ and $\M(\cstar_r(G))$ and obtain in both cases a one-to-one
correspondence with closed subgroups of $G$ \cite{salmi:strict-invariant}.

Finally, we also have the following result from
\cite{salmi:compact-subgroups}, concerning left invariant
C*-subalgebras of coamenable locally compact quantum groups. 
Recall that a conditional expectation on a C*-algebra $A$  
is a norm $1$ projection from $A$ onto a C*-subalgebra of $A$.
The following result also employs a symmetry condition 
that is related to the problem brought out by Pal's counterexample. 
We postpone the formulation of this symmetry condition 
until after the theorem.

\begin{theorem} \label{thm:X-H}
There is a one-to-one correspondence between compact quantum subgroups
of $\G$ and symmetric, left invariant C*-subalgebras $X$ of $\C_0(\G)$  
with a conditional expectation $P$ from $\C_0(\G)$ onto $X$ such that
$(\id\ot P)\cop = \cop P$. 
\end{theorem}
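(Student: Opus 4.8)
The plan is to route the correspondence through the notion of idempotent state. To a pair $(X,P)$ as in the statement I attach the functional $\omega:=\epsilon\circ P$, where $\epsilon$ is the counit supplied by coamenability; to an idempotent state $\omega$ on $\G$ I attach the pair $\bigl(\rt_\omega(\C_0(\G)),\,\rt_\omega\bigr)$, where $\rt_\omega=(\id\ot\omega)\cop$ is the right convolution operator of Section~\ref{sec:inv}. First I would check that both assignments land where they should and are mutually inverse. Using $\omega\conv\omega=\omega$ and coassociativity one computes $\rt_\omega^2=\rt_\omega$, and since $\rt_\omega$ is unital, completely positive, contractive and has C*-subalgebra range, it is a conditional expectation onto $X:=\rt_\omega(\C_0(\G))$ (Tomiyama; this is the C*-subalgebra associated to $\omega$ in Section~\ref{sec:inv-id}); coassociativity also yields $\cop\,\rt_\omega=(\id\ot\rt_\omega)\cop$ and $\lt_\sigma\rt_\omega=\rt_\omega\lt_\sigma$ for all $\sigma$, so $X$ is left invariant and $\rt_\omega$ meets the intertwining condition. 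Conversely $\epsilon\circ P$ is a state, and a short slice computation using $(\id\ot P)\cop=\cop P$, the counit identity $(\id\ot\epsilon)\cop=\id$ and $P^2=P$ shows $(\epsilon P)\conv(\epsilon P)=\epsilon P$. Finally $\epsilon\circ\rt_\omega=(\epsilon\ot\omega)\cop=\omega$ and $\rt_{\epsilon\circ P}=(\id\ot\epsilon)(\id\ot P)\cop=(\id\ot\epsilon)\cop P=P$, so that idempotent states on $\G$ are in one-to-one correspondence with the pairs $(X,P)$ in the statement but \emph{without} the symmetry requirement.

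After this reduction it suffices to show that, under the above bijection, the \emph{symmetric} pairs correspond exactly to the \emph{Haar idempotents} $\omega=\phi_\H\circ\pi$, and that distinct compact quantum subgroups yield distinct such $\omega$; the direction ``compact quantum subgroup $\Rightarrow$ symmetric pair'' and the injectivity are comparatively soft. For the former: $\phi_\H$ is invariant under the (unitary) antipode of $\H$, the surjection $\pi$ intertwines the antipodal data of $\G$ and $\H$, and so $\omega=\phi_\H\pi$ inherits exactly the invariance property that constitutes the symmetry condition; one then checks directly that $\rt_\omega(\C_0(\G))$ together with $\rt_\omega$ is the pair attached to $\omega$, which closes this half.

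The substantial direction is: a symmetric pair $(X,P)$ arises from a compact quantum subgroup. Put $\omega:=\epsilon\circ P$ and $N_\omega:=\{a\in\C_0(\G)\col\omega(a^*a)=0\}$. Here I would use the symmetry hypothesis --- and this is the one place it enters essentially --- to upgrade $N_\omega$ from a closed left ideal (which it is for every state) to a closed \emph{two-sided} ideal of $\C_0(\G)$; the failure of this upgrade for a general idempotent state is precisely the phenomenon behind Pal's counterexample and behind $1_H$ for a non-normal open subgroup. Granting it, put $\C(\H):=\C_0(\G)/N_\omega$ with quotient map $\pi$; then $\omega$ descends to a \emph{faithful} state $\phi_\H$ on $\C(\H)$. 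Since a faithful state tensors to a faithful state on the minimal tensor product, the identity $(\omega\ot\omega)\bigl(\cop(a)^*\cop(a)\bigr)=(\omega\conv\omega)(a^*a)=\omega(a^*a)$ forces $(\pi\ot\pi)\cop(a)=0$ whenever $a\in N_\omega$; hence $\cop$ descends to a $*$-homomorphism $\cop_\H\col\C(\H)\to\M(\C(\H)\ot\C(\H))$ with $(\pi\ot\pi)\cop=\cop_\H\pi$, and coassociativity together with the density conditions pass to the quotient. Boundedness of the bi-invariant state $\phi_\H$ forces the Haar weights of $\H$ to be bounded, so $\C(\H)$ is unital and $\H$ is a compact quantum group with Haar state $\phi_\H$; thus $\pi$ exhibits $\H$ as a compact quantum subgroup of $\G$ with $\omega=\phi_\H\pi$. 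A final routine check identifies the pair attached to this $\H$ with $(X,P)$ and verifies that $\H\mapsto(X,P)$ inverts $(X,P)\mapsto\H$, yielding the one-to-one correspondence.

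I expect the step ``$N_\omega$ is two-sided'' to be the main obstacle: once the quotient C*-algebra $\C(\H)$ is available, the descent of the comultiplication, the faithfulness and bi-invariance of $\phi_\H$, and the unitality of $\C(\H)$ follow with little effort, so essentially the entire force of the symmetry hypothesis is spent there. A secondary point demanding care is the bookkeeping around which representative of $\H$ one takes (its reduced form), so that the correspondence is genuinely one-to-one, and the verification that the two passages between $(X,P)$ and $\H$ are mutually inverse.
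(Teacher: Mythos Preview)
The paper does not prove this theorem: it is quoted from \cite{salmi:compact-subgroups} without argument, so there is no in-paper proof to compare your attempt against directly.

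Your strategy is nonetheless correct and is precisely the route the survey itself develops in Section~\ref{sec:inv-id}. Your first reduction --- that pairs $(X,P)$ satisfying $(\id\ot P)\cop=\cop P$ biject with idempotent states via $\omega=\epsilon P$ and $P=\rt_\omega$ --- is right; the identity $\rt_{\epsilon P}=(\id\ot\epsilon)(\id\ot P)\cop=(\id\ot\epsilon)\cop\, P=P$ does all the work, and it needs no unimodularity hypothesis (unlike the paper's Theorem~4, which trades the intertwining condition for the weaker $\phi\circ P=\phi$ and must then assume $\phi=\psi$). Your second reduction --- symmetric $X_\omega$ $\Leftrightarrow$ $\omega$ a Haar idempotent $\Leftrightarrow$ $N_\omega$ two-sided --- is exactly the content of the paper's Theorem~5, also quoted without proof from \cite{salmi-skalski:idempotent-states}. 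The paper even remarks, after Theorem~7, that this idempotent-state machinery \emph{improves} Theorem~\ref{thm:X-H}, so deriving the latter through the former is fully in line with the paper's viewpoint.

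You have correctly isolated the one substantive step you do not actually carry out: why symmetry of $X_\omega$ forces $N_\omega$ to be a two-sided ideal. Your remarks about antipode invariance point in the right direction but do not give the mechanism. Everything else in your outline --- the descent of $\cop$ to $\C_0(\G)/N_\omega$, faithfulness and bi-invariance of the induced state, unitality of the quotient, and the mutual-inverse bookkeeping (using that coamenability of $\G$ passes to the quotient, so the Haar state of $\H$ is faithful and $\ker\pi=N_\omega$) --- is routine once that step is in hand.
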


Let $G$ be an amenable locally compact group and $H$ an 
open subgroup of $G$. As noted above,  $\cstar_r(H)$
is an invariant C*-subalgebra of $\cstar_r(G)$. However,
$\cstar_r(H)$ is not associated with a compact quantum subgroup 
unless $H$ is normal, in which case $\cstar_r(H)$ is associated with 
$\cstar_r(G/H)$. We shall need an analogue of this normality condition
for more  general quantum groups. This can be done through the 
so-called multiplicative unitary of a locally compact quantum group
$\G$.  There is a canonical way to define a unitary operator 
$W$ on $\ltwo(\G)\ot\ltwo(\G)$ such that $W$ determines 
the quantum group $\G$ \cite[Proposition~3.17]{kustermans-vaes:lcqg}.
Here $\ltwo(\G)$ denotes the Hilbert space obtained by applying 
the GNS-construction to the left Haar weight of $\G$. 
The C*-algebra $\C_0(\G)$ is faithfully represented on $\ltwo(\G)$ 
and it is natural to identify $\C_0(\G)$ with its image in $\B(\ltwo(\G))$.
The multiplicative unitary $W$ determines the comultiplication via
\[
\cop(a) = W^*(1\ot a)W \qquad(a\in \C_0(\G)).
\]
The notion of multiplicative unitary is central in the 
theory of locally compact quantum groups; seminal work 
in this area is due to Baaj and Skandalis~\cite{baaj-skandalis} and
Woronowicz~\cite{woronowicz:manageable}.

We say that a C*-subalgebra $X$ of $\C_0(\G)$  is \emph{symmetric}
if 
\[
W(x\ot 1)W^* \in \M(X\ot B_0(\ltwo(\G))) 
\]
whenever $x\in X$ (here $\B_0$ denotes the  compact operators).
Tomatsu \cite{tomatsu:coideals} introduced this type 
of condition, calling it coaction symmetry 
(due to the fact that $X$ is symmetric if and only if 
it is closed under the natural 
action of the dual quantum group of $\G$).
Returning to the co-commutative case,  
the left invariant C*-subalgebra $\C^*_r(H)$ 
associated with an open subgroup $H$ of $G$ is symmetric 
if and only if $H$ is normal \cite{salmi:compact-subgroups}.

\section{Idempotent states and left invariant C*-subalgebras} 
\label{sec:inv-id}

Although Pal's counterexample showed that we cannot associate 
all idempotent states to compact quantum subgroups, we may still have a 
chance of associating idempotent states to suitable left invariant 
C*-subalgebras. The results in this section are from 
\cite{salmi-skalski:idempotent-states}, many of them 
generalisations from \cite{franz-skalski:idempotent-states}
or \cite{franz-skalski-tomatsu:idem}.

Let $\G$ be a coamenable locally compact quantum group.
If $\omega$ is an idempotent state on $\G$, then 
$\rt_\omega(\C_0(\G))$ is a left invariant C*-subalgebra 
of $\C_0(\G)$ and $\rt_\omega$ is a conditional expectation 
onto this C*-subalgebra.  The following result 
generalises an earlier result due to Franz and Skalski 
\cite{franz-skalski:idempotent-states} concerning compact quantum groups.

\begin{theorem}
Suppose that $\G$ is unimodular \textup{(}i.e.\ $\phi=\psi$\textup{)}.
There is a one-to-one correspondence between
idempotent states $\omega$ on $\G$ and left invariant 
C*-subalgebras $X$ of $\C_0(\G)$
with a conditional expectation $P$ from $\C_0(\G)$ onto $X$  
such that $\phi\circ P = \phi$. The correspondence is given by
\[
X_\omega = \rt_\omega(\C_0(\G)),\qquad \omega_X = \epsilon P_X.
\]
where $\epsilon$ is the counit of $\G$.
\end{theorem}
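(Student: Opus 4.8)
The plan is to verify that the two assignments $\omega \mapsto X_\omega$ and $X \mapsto \omega_X$ are well defined and mutually inverse. I would organise the argument as four steps.

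\emph{Step 1: $X_\omega$ is a left invariant C*-subalgebra with the stated conditional expectation.} Given an idempotent state $\omega$, the excerpt already asserts that $\rt_\omega$ is a conditional expectation onto $X_\omega = \rt_\omega(\C_0(\G))$ and that $X_\omega$ is left invariant; I would recall why. Idempotency $\omega\conv\omega=\omega$ together with coassociativity gives $\rt_\omega\circ\rt_\omega=\rt_\omega$, so $\rt_\omega$ is an idempotent; it is completely positive and contractive (being the slice of a $*$-homomorphism by a state), hence by Tomiyama's theorem it is a conditional expectation onto its range. Left invariance of the range follows because $\lt_\sigma$ and $\rt_\omega$ commute by coassociativity. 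Finally $\phi\circ\rt_\omega=\phi$: this is exactly right invariance of the Haar weight, $\psi\bigl((\id\ot\omega)\cop(b)\bigr)=\omega(1)\psi(b)=\psi(b)$, combined with the unimodularity hypothesis $\phi=\psi$. (The unimodularity is what lets us use the \emph{right} convolution operator $\rt_\omega$ against the \emph{left} Haar weight $\phi$.)

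\emph{Step 2: $\omega_X$ is an idempotent state.} Given $(X,P)$ with $\phi\circ P=\phi$, set $\omega_X=\epsilon\circ P$, using the counit $\epsilon$ of the coamenable $\G$. It is a state since $\epsilon$ is a state and $P$ is unital and positive. For idempotency I would compute $\omega_X\conv\omega_X=(\epsilon P\ot\epsilon P)\cop$ and use the intertwining property $(\id\ot P)\cop=\cop P$ to pull the second $P$ through $\cop$, then the counit identity $(\id\ot\epsilon)\cop=\id$, reducing to $(\epsilon\ot\id)\,(\id\ot P)\cop P = \epsilon\circ P\circ P=\epsilon\circ P=\omega_X$ once we know $P$ is idempotent (which it is, being a conditional expectation). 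The only subtlety is that $P$ a priori only satisfies $\phi\circ P=\phi$, not the full intertwining $(\id\ot P)\cop=\cop P$ of Theorem~\ref{thm:X-H}; so the identity $(\id\ot P)\cop=\cop P$ must itself be derived from left invariance of $X$ plus $\phi\circ P=\phi$. This derivation — showing that the $\phi$-preserving conditional expectation onto a left invariant $X$ automatically intertwines $\cop$ — is where the faithfulness of $\phi$ and a density/approximation argument enter.

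\emph{Step 3: the two maps are mutually inverse.} For $\omega_{X_\omega}=\omega$: we have $\omega_{X_\omega}=\epsilon\circ\rt_\omega=(\epsilon\ot\id)\cdots$ wait, rather $\omega_{X_\omega}=\epsilon\circ\rt_\omega$ where $\rt_\omega=(\id\ot\omega)\cop$, so $\omega_{X_\omega}=(\epsilon\ot\omega)\cop=\omega$ by the counit identity $(\epsilon\ot\id)\cop=\id$. For $X_{\omega_X}=X$ and $P_{X_{\omega_X}}=P$: one shows $\rt_{\omega_X}=P$ as operators on $\C_0(\G)$, i.e.\ $(\id\ot\epsilon P)\cop = P$; using $(\id\ot P)\cop=\cop P$ from Step~2 this becomes $(\id\ot\epsilon)\cop P = P$, which is again the counit identity. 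Uniqueness of the conditional expectation onto a fixed $X$ (when one preserving $\phi$ exists) then closes the loop.

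\emph{Main obstacle.} The delicate point is Step~2: proving that any conditional expectation $P$ onto a left invariant C*-subalgebra $X$ with $\phi\circ P=\phi$ must satisfy $(\id\ot P)\cop=\cop P$. Everything else is formal manipulation with the counit and coassociativity. I expect this to be handled by testing both sides against $\omega\ot\phi$ for $\omega\in\C_0(\G)^*$, using left invariance of $X$ to see that $\cop P(a)$ lands in $\M(\C_0(\G)\ot X)$, and exploiting faithfulness of $\phi$ together with the module property of $P$ over $X$ and the $\phi$-invariance to identify the two multipliers; the coamenability (existence of $\epsilon$) is used both here and throughout to make sense of $\epsilon\circ P$.
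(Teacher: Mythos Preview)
The paper does not actually contain a proof of this theorem: it is a survey, and the result is quoted from \cite{salmi-skalski:idempotent-states} (with the compact case attributed to \cite{franz-skalski:idempotent-states}) without any argument. So there is no ``paper's own proof'' against which to compare your proposal.

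That said, your outline is sound and your diagnosis of where the real work lies is accurate. Steps~1 and~3 are indeed formal: idempotency of $\rt_\omega$ from coassociativity plus $\omega\conv\omega=\omega$, Tomiyama's theorem to get a conditional expectation, left invariance from the commutation $\lt_\sigma\rt_\omega=\rt_\omega\lt_\sigma$, and $\phi\circ\rt_\omega=\phi$ from right invariance of $\psi$ together with unimodularity. The counit manipulations in Step~3 are correct as written.

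The genuine content, as you say, is Step~2: establishing $(\id\ot P)\cop=\cop P$ from the hypotheses that $X$ is left invariant and $\phi\circ P=\phi$. Your sketch (test against $\omega\ot\phi$, use faithfulness of $\phi$ and the module property of $P$) is the right shape, but it is only a sketch; in particular one has to be careful that $\phi$ is only a weight, so pairing ``$\omega\ot\phi$'' against multipliers and invoking faithfulness require the usual semifinite/lower-semicontinuous machinery rather than a one-line duality argument. You have correctly located the obstacle but not resolved it, and since the survey gives no hint of how \cite{salmi-skalski:idempotent-states} handles this step, filling in the details would require consulting that source.
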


The preceding result leaves room for improvement: 
one would like to remove the unimodularity assumption 
in which case the conditional expectation should preserve 
both left and right Haar weights. 

The following result characterises those idempotent states 
that arise from compact quantum subgroups. 
It also brings together the symmetry condition from 
the preceding section. 
The equivalence between 
the first and the third condition is proved for 
compact quantum groups in \cite{franz-skalski-tomatsu:idem}.

\begin{theorem} 
Let $\omega$ be an idempotent state on $\G$ and let $X_{\omega} =
\rt_\omega(\C_0(\G))$. The following are equivalent:
\begin{enumerate}
\item $\omega$ is a Haar idempotent; 
\item $X_{\omega}$ is symmetric;
\item $N_{\omega}:=\set{a \in \C_0(\G)}{\omega(a^*a)=0}$ is an ideal.
\end{enumerate}
\end{theorem}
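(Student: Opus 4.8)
The plan is to prove the cycle of implications $(1)\Rightarrow(3)\Rightarrow(2)\Rightarrow(1)$, exploiting the fact that $\rt_\omega$ is a conditional expectation onto $X_\omega$ and that $\omega = \epsilon\circ \rt_\omega$ (so $N_\omega$ is essentially the kernel of a conditional expectation composed with the counit).

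First, for $(1)\Rightarrow(3)$: if $\omega = \phi_\H\pi$ for a compact quantum subgroup $\H$ with morphism $\pi\col\C_0(\G)\to\C(\H)$, then since the Haar state $\phi_\H$ is a \emph{faithful} state on the reduced version of $\C(\H)$ (or at least we can factor through the GNS representation of $\phi_\H$), the set $N_\omega$ is the preimage under $\pi$ of the kernel of the GNS map of $\phi_\H$; because $\pi$ is a $*$-homomorphism and $\phi_\H$ is a trace-like invariant state, $N_\omega = \pi\inv(\ker\phi_\H)$ should be a (closed, two-sided) ideal. The key point here is that $\phi_\H$, being bi-invariant on a compact quantum group, has the property that $\phi_\H(a^*a)=0$ implies $\phi_\H(ba^*a)=0$ for all $b$ — i.e.\ its null space is an ideal in $\C(\H)$ — and ideals pull back to ideals along $*$-homomorphisms. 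Thus $(1)\Rightarrow(3)$.

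Next, $(3)\Rightarrow(2)$: assume $N_\omega$ is an ideal. The idea is to express symmetry of $X_\omega$ in terms of $\omega$ and $W$, then use that $N_\omega$ being an ideal forces a compatibility between the GNS representation of $\omega$ and the multiplicative unitary. Concretely, $\omega$ induces a GNS representation $(\pi_\omega,\mathcal H_\omega,\xi_\omega)$ of $\C_0(\G)$; when $N_\omega$ is an ideal, $\pi_\omega$ factors through the quotient $\C_0(\G)/N_\omega$, which is then a C*-algebra, and $\omega$ descends to a \emph{faithful} state on it. One then wants to show that the quantum group structure (encoded by $W$, equivalently by $\cop$) descends to this quotient, i.e.\ that $N_\omega\ot\C_0(\G) + \C_0(\G)\ot N_\omega$ is a coideal, so that $\C_0(\G)/N_\omega$ carries a comultiplication making $\omega$ its Haar state — and compact quantum subgroups correspond exactly to symmetric $X_\omega$ by the previous theorem. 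Running this through the identification $X_\omega\cong\C_0(\G)/N_\omega$ (valid because $\rt_\omega$ is a conditional expectation with kernel related to $N_\omega$), one gets that $X_\omega$ is symmetric.

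Finally, $(2)\Rightarrow(1)$ is essentially immediate from the previously stated Theorem~\ref{thm:X-H}: if $X_\omega$ is symmetric, then $X_\omega$ together with the conditional expectation $\rt_\omega$ (which satisfies $(\id\ot\rt_\omega)\cop = \cop\,\rt_\omega$ — this intertwining property is part of what makes $\rt_\omega$ a genuine ``right convolution'' projection, and should be verified from idempotency $\omega\conv\omega=\omega$ and coassociativity) corresponds to a compact quantum subgroup $\H$, and then $\omega = \epsilon\circ\rt_\omega$ is the pullback of the Haar state $\phi_\H$ along the associated morphism, hence a Haar idempotent. The main obstacle I expect is the step $(3)\Rightarrow(2)$: one must carefully relate the two a priori different objects $N_\omega$ (a left ideal-type null space of $\omega$) and the kernel of $\rt_\omega$ inside $X_\omega$, check that $N_\omega$ being a \emph{two-sided} ideal is exactly what upgrades the left-invariant C*-subalgebra $X_\omega$ to a symmetric one, and make the argument work at the C*-algebraic (rather than von Neumann algebraic) level where multiplier algebras and nondegeneracy conditions need care. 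The unimodularity-type issues that plagued the previous theorem should not intervene here, since we are only asking whether $\omega$ is \emph{some} Haar idempotent, not matching up weights.
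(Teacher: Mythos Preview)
The paper is a survey and does not prove this theorem; it cites \cite{salmi-skalski:idempotent-states} for the full result and notes that the equivalence $(1)\Leftrightarrow(3)$ in the compact case is in \cite{franz-skalski-tomatsu:idem}. There is therefore no in-paper proof to compare against, and I can only assess your outline on its own merits.

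Your steps $(1)\Rightarrow(3)$ and $(2)\Rightarrow(1)$ are sound. The null space of the Haar state on a compact quantum group is indeed a two-sided ideal (via the unitary antipode, or the KMS property of $\phi_\H$), and ideals pull back along $*$-homomorphisms; the intertwining $(\id\ot\rt_\omega)\cop=\cop\,\rt_\omega$ follows from coassociativity and $\omega=\epsilon\circ\rt_\omega$ from the counit axiom, so Theorem~\ref{thm:X-H} applies as you say. The genuine gap is $(3)\Rightarrow(2)$, which you effectively argue as $(3)\Rightarrow(1)$. Your proposed identification $X_\omega\cong\C_0(\G)/N_\omega$ is not justified: $\ker\rt_\omega$ and $N_\omega$ are a priori different objects, and the Schwarz inequality $\rt_\omega(a)^*\rt_\omega(a)\le\rt_\omega(a^*a)$ together with $\omega=\omega\circ\rt_\omega$ only yields $\rt_\omega(N_\omega)\sub N_\omega$, not $\rt_\omega(N_\omega)=0$. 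The workable route is to drop that identification and deal directly with the quotient $B=\C_0(\G)/N_\omega$, showing that $\cop$ descends to make $B$ a compact quantum group whose Haar state is the one induced by $\omega$. You have not addressed the two substantive points this requires: why $B$ is \emph{unital} (so that the putative subgroup is compact rather than merely closed), and why $(q\ot q)\cop$ annihilates $N_\omega$, where $q\col\C_0(\G)\to B$ is the quotient map (so that the comultiplication descends). Both use idempotency of $\omega$ in an essential, not merely formal, way, and this is where the analytic content of the cited proof actually lies.
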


The set $N_\omega$ in the third condition of the 
the preceding result is always a left ideal, so 
the condition is automatically satisfied if $\C_0(\G)$ is commutative. 
Consequently, we get the Kawada--It\^o 
theorem from section~\ref{sec:kawada-ito}
as a corollary. 

\begin{corollary}[Kawada--It\^o]
If $\C_0(\G)$ is commutative, then all
idempotent states on $\G$ are Haar idempotents.
\end{corollary}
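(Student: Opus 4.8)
The plan is to read the corollary off the preceding theorem, using the elementary fact that in a commutative C*-algebra every left ideal is automatically two-sided. So let $\omega$ be an idempotent state on $\G$ with $\C_0(\G)$ commutative. As noted in the section on locally compact quantum groups, a quantum group with commutative $\C_0(\G)$ is automatically coamenable (it is $\C_0(G)$ for an ordinary locally compact group $G$), so the preceding theorem applies to $\omega$.

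The key point I would verify is that $N_\omega = \set{a \in \C_0(\G)}{\omega(a^*a)=0}$ is always a \emph{left} ideal — this is just the left kernel of the GNS representation of the state $\omega$. Concretely, if $a\in N_\omega$ and $c\in\C_0(\G)$, then $a^*c^*ca \le \|c\|^2\, a^*a$ in $\C_0(\G)$, and positivity of $\omega$ gives $0\le \omega\bigl((ca)^*(ca)\bigr)\le \|c\|^2\,\omega(a^*a)=0$, so $ca\in N_\omega$. This is exactly the content of the remark following the theorem. When $\C_0(\G)$ is commutative we have $ca=ac$, so $N_\omega$ is also closed under multiplication on the right; hence $N_\omega$ is a two-sided ideal, i.e.\ condition~(3) of the preceding theorem holds.

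By the equivalence (3)$\Rightarrow$(1) in that theorem, $\omega$ is therefore a Haar idempotent: $\omega = \phi_\H\pi$ for some compact quantum subgroup $\H$ of $\G$ with associated morphism $\pi\col\C_0(\G)\to\C(\H)$. Finally I would translate this back to the classical language: since $\C_0(\G)$ is commutative, $\G$ is an ordinary locally compact group $G$, a compact quantum subgroup is an ordinary compact subgroup $H\le G$, and the pulled-back Haar state $\phi_\H\pi$ is precisely the normalised Haar measure of $H$ regarded as a measure on $G$. This recovers the Kawada--It\^o theorem of section~\ref{sec:kawada-ito}.

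I do not expect any genuine obstacle here: all the substance is already carried by the preceding theorem, and the only thing to check is the routine inequality showing $N_\omega$ is a left ideal. The one place to be slightly careful is making sure the hypotheses of the preceding theorem are in force (coamenability, which is free in the commutative case) and spelling out how ``Haar idempotent'' specialises to ``normalised Haar measure of a compact subgroup'' when $\C_0(\G)$ is commutative.
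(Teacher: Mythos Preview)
Your proposal is correct and follows exactly the paper's approach: the paper simply observes (in the sentence preceding the corollary) that $N_\omega$ is always a left ideal, hence automatically two-sided when $\C_0(\G)$ is commutative, so condition~(3) of the preceding theorem holds and $\omega$ is a Haar idempotent. Your write-up adds the explicit GNS inequality for the left-ideal property and the translation back to classical language, but the argument is the same.
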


Finally, we have the following correspondence result, 
which does not assume unimodularity, but works only 
for Haar idempotents.

\begin{theorem}
There is a one-to-one correspondence between
Haar idempotents $\omega$ on $\G$ and symmetric, left invariant 
C*-subalgebras $X$ of $\C_0(\G)$
with a conditional expectation $P$ from $\C_0(\G)$ onto $X$  
such that $\phi\circ P = \phi$ and $\psi\circ P = \psi$.
\end{theorem}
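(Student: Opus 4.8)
The plan is to derive the correspondence from Theorem~\ref{thm:X-H} together with the fact, recorded earlier, that every compact quantum subgroup $\H$ of $\G$ yields the Haar idempotent $\phi_\H\pi$. First I would fix the dictionary between the two descriptions. Given a compact quantum subgroup $\H$ with morphism $\pi$, the conditional expectation attached to it by Theorem~\ref{thm:X-H} is $P=(\id\ot\phi_\H\pi)\cop=\rt_\omega$ for $\omega=\phi_\H\pi$, and its range is the C*-subalgebra $X_\omega=\rt_\omega(\C_0(\G))$; conversely, from a pair $(X,P)$ with $(\id\ot P)\cop=\cop P$ one recovers $\omega$ as $\epsilon P$ — a state since $P$ is positive and unital, indeed an idempotent one — and then $(\id\ot\epsilon)(\id\ot P)\cop=(\id\ot\epsilon)\cop P=P$ shows $\rt_\omega=P$ and $X_\omega=X$. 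So the assignments $\omega\mapsto(X_\omega,\rt_\omega)$ and $(X,P)\mapsto\epsilon P$ are mutually inverse, and the theorem reduces to one statement: for a \emph{symmetric} left invariant C*-subalgebra $X$ of $\C_0(\G)$ with a conditional expectation $P$ onto $X$, the intertwining relation $(\id\ot P)\cop=\cop P$ holds \emph{if and only if} $\phi\circ P=\phi$ and $\psi\circ P=\psi$.

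For the ``only if'' part, assume $(\id\ot P)\cop=\cop P$, so $P=\rt_\omega$ with $\omega=\epsilon P$ an idempotent state. Then $\psi\circ P=\psi\circ\rt_\omega=\psi\circ(\id\ot\omega)\cop=\omega(1)\psi=\psi$ by right invariance of the right Haar weight. The condition $\phi\circ P=\phi$ is the delicate one: since $\rt_\omega$ commutes with every left convolution operator $\lt_\mu$, the weight $\phi\circ\rt_\omega$ is again left invariant, and it agrees with $\phi$ on $X=X_\omega$ because $\rt_\omega$ restricts to the identity there; the issue is to upgrade this to $\phi\circ\rt_\omega=\phi$ by uniqueness of the left Haar weight, and this is exactly where symmetry of $X$ is used — it is what rules out $\phi\circ\rt_\omega$ being a degenerate left invariant weight incomparable with $\phi$ (equivalently, by the characterisation theorem, it is what makes $\omega$ a \emph{Haar} idempotent, so that the modular element of $\G$ is controlled along the fibres of $\rt_\omega$).

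For the ``if'' part, assume $X$ is symmetric and left invariant with a conditional expectation $P$ onto $X$ satisfying $\phi\circ P=\phi$ and $\psi\circ P=\psi$; I would aim to recover $(\id\ot P)\cop=\cop P$, after which Theorem~\ref{thm:X-H} supplies a compact quantum subgroup and $\epsilon P$ is its Haar idempotent. Since $P$ preserves the faithful semifinite normal weights $\phi$ and $\psi$, Takesaki's theorem gives that $P$ commutes with the modular automorphism groups $\sigma^\phi$ and $\sigma^\psi$; these, together with the scaling group, encode the whole modular structure of $\G$, and combined with left invariance ($\cop(X)\sub\M(\C_0(\G)\ot X)$) and with symmetry (invariance of $X$ under the action of $\dual\G$) this constrains $P$ strongly enough to force the intertwining relation with $\cop$.

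The main obstacle is precisely this equivalence between the coalgebraic condition $(\id\ot P)\cop=\cop P$ and the two weight-invariance conditions — or, stated on the side of states, the assertion that the right convolution operator $\rt_\omega$ of a \emph{Haar} idempotent preserves the left Haar weight $\phi$. It automatically preserves $\psi$, and the left convolution operator $\lt_\omega$ automatically preserves $\phi$; the difficulty is entirely in crossing left with right, which is where the modular element of $\G$ must be brought under control, and where the unimodularity hypothesis was needed in the earlier correspondence theorem. Everything else in the proof is bookkeeping with the counit and with the already-available Theorem~\ref{thm:X-H} and characterisation of Haar idempotents.
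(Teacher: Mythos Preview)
The paper is a survey and does not itself prove this theorem; it is quoted from \cite{salmi-skalski:idempotent-states}, and the only commentary the paper offers is that it ``improves Theorem~\ref{thm:X-H} in the sense that the condition that $(\id\ot P)\cop = \cop P$ may be replaced by the more natural condition that $P$ preserves both left and right Haar weights.'' So there is no in-paper proof to compare against, and the question becomes whether your outline would stand on its own.

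Your reduction is correct and well organised: the dictionary $\omega\leftrightarrow(X_\omega,\rt_\omega)$, $(X,P)\leftrightarrow\epsilon P$ is set up cleanly, the verification that $(\id\ot P)\cop=\cop P$ forces $P=\rt_{\epsilon P}$ with $\epsilon P$ idempotent is right, and you have correctly isolated the single substantive claim, namely that for a symmetric left invariant $X$ with conditional expectation $P$, the intertwining $(\id\ot P)\cop=\cop P$ is equivalent to $\phi P=\phi$ and $\psi P=\psi$. This is exactly the content the paper flags.

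The gap is that you do not actually prove this equivalence; you describe the difficulty and gesture at tools. In the ``only if'' direction your argument that $\phi\circ\rt_\omega$ is left invariant and agrees with $\phi$ on $X_\omega$ does not by itself invoke uniqueness of the left Haar weight: you have not shown $\phi\circ\rt_\omega$ is lower semicontinuous, nonzero, or even a weight in the required sense, and ``symmetry rules out degeneracy'' is an assertion, not an argument. What is really needed is an antipode computation: a Haar idempotent is invariant under the unitary antipode $R$ (this uses that the Haar state $\phi_\H$ is $R_\H$-invariant and that $\pi$ intertwines the antipodes), and since $\phi=\psi\circ R$, one converts $\phi\circ\rt_\omega$ into $\psi\circ\lt_{\omega\circ R}=\psi\circ\lt_\omega$ composed with $R$, which then reduces to the easy invariance. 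In the ``if'' direction, invoking Takesaki's theorem gives modular-group commutation, but you have not explained how that, together with symmetry, yields $(\id\ot P)\cop=\cop P$; this step requires a genuine argument (again the antipode and the relation $\phi=\psi\circ R$ are the natural route, or else one works at the von Neumann level and uses the uniqueness of the $\phi$-preserving expectation to identify $P$ with $\rt_{\epsilon P}$ once the latter is shown to preserve $\phi$). As written, your proposal identifies the obstacle but does not clear it.
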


Note that the preceding theorem improves Theorem~\ref{thm:X-H}
in the sense that the condition that $(\id\ot P)\cop = \cop P$ 
may be replaced by the more natural condition that $P$ preserves 
both left and right Haar weights.

\begin{acknow}
This paper is based on a talk given in 
the conference `Operator Theory and its Applications'
held in honour of Victor Shulman in Gothenburg 2011;
I thank the organisers Ivan Todorov and Lyudmila Turowska
for a great conference. I thank Nico Spronk for generous 
support throughout my postdoctoral stay at University of Waterloo
and in particular for enabling my conference visit. 
I thank Emil Aaltonen Foundation for support during the
preparation of this paper. 
I thank Adam Skalski and the referee for helpful comments improving the paper. 
\end{acknow}


\begin{thebibliography}{10}

\bibitem{baaj-blanchard-skandalis}
S.~Baaj, E.~Blanchard and G.~Skandalis, \emph{Unitaires multiplicatifs en
  dimension finie et leurs sous-objets}, Ann. Inst. Fourier (Grenoble)
  \textbf{49} (1999), 1305--1344.

\bibitem{baaj-skandalis}
S.~Baaj and G.~Skandalis, \emph{Unitaires multiplicatifs et dualit{\'e} pour
  les produits crois{\'e}s de {$C\sp *$}-alg{\`e}bres}, Ann. Sci. {\'E}cole
  Norm. Sup. (4) \textbf{26} (1993), 425--488.

\bibitem{banica-franz-skalski:id-states-hopf-images}
T.~Banica, U.~Franz and A.~Skalski, \emph{Idempotent states and the inner
  linearity property}, preprint, arXiv:1112.5018.

\bibitem{cohen:idempotents}
P.~J. Cohen, \emph{On a conjecture of {L}ittlewood and idempotent measures},
  Amer. J. Math. \textbf{82} (1960), 191--212.

\bibitem{dkss:subgroups}
M.~Daws, P.~Kasprzak, A.~Skalski and P.~M.~So\l tan,
\emph{Closed quantum subgroups of locally compact quantum groups},
preprint.

\bibitem{deleeuw-mirkil}
K.~de~Leeuw and H.~Mirkil, \emph{Translation-invariant function algebras on
  abelian groups}, Bull. Soc. Math. France \textbf{88} (1960), 345--370.

\bibitem{enock-schwartz:kac}
M.~Enock and J.-M. Schwartz, \emph{Kac algebras and duality of locally compact
  groups}, Springer-Verlag, Berlin, 1992.

\bibitem{franz-skalski:ergodic}
U.~Franz and A.~Skalski, \emph{On ergodic properties of convolution operators
  associated with compact quantum groups}, Colloq. Math. \textbf{113} (2008),
  13--23.

\bibitem{franz-skalski:idempotent-states}
U.~Franz and A.~Skalski, \emph{A new characterisation of idempotent states on
  finite and compact quantum groups}, C. R. Math. Acad. Sci. Paris \textbf{347}
  (2009), 991--996.

\bibitem{franz-skalski:algebraic-idem}
U.~Franz and A.~Skalski, \emph{On idempotent states on quantum groups}, J. Algebra \textbf{322}
  (2009), 1774--1802.

\bibitem{franz-skalski-tomatsu:idem}
U.~Franz, A.~Skalski and R.~Tomatsu, \emph{Idempotent states on compact
  quantum groups and their classification on {$U_q(2)$}, {$SU_q(2)$}, and
  {$SO_q(3)$}}, to appear in J.\ Noncomm.\ Geom., arXiv:0903.2363.

\bibitem{grenander:probabilities-alg}
U.~Grenander, \emph{Probabilities on algebraic structures}, Almqvist \&
  Wiksell, Stockholm, 1968.

\bibitem{hognas-mukherjea:probability-measures-on-semigroups}
G.~H{\"o}gn{\"a}s and A.~Mukherjea, \emph{Probability measures on semigroups:
  convolution products, random walks, and random matrices}, second ed.,
  Springer, New York, 2011.

\bibitem{host:idempotents}
B.~Host, \emph{Le th\'eor\`eme des idempotents dans {$B(G)$}}, Bull. Soc. Math.
  France \textbf{114} (1986), 215--223.

\bibitem{ilie-spronk:cb-homos}
M.~Ilie and N.~Spronk, \emph{Completely bounded homomorphisms of the {F}ourier
  algebras}, J. Funct. Anal. \textbf{225} (2005), 480--499.

\bibitem{kalantar-neufang-ruan:poisson}
M.~Kalantar, M.~Neufang and Z.-J. Ruan, \emph{Poisson boundaries over locally
  compact quantum groups}, preprint, arXiv:1111.5828.

\bibitem{kawada-ito:idem}
Y.~Kawada and K.~It{\^o}, \emph{On the probability distribution on a compact
  group. {I}}, Proc. Phys.-Math. Soc. Japan (3) \textbf{22} (1940), 977--998.

\bibitem{kelley:averaging}
J.~L. Kelley, \emph{Averaging operators on {$C_{\infty }(X)$}}, Illinois J.
  Math. \textbf{2} (1958), 214--223.

\bibitem{kus-tuset:survey1}
J.~Kustermans and L.~Tuset, \emph{A survey of {C*}-algebraic quantum groups,
  part {I}}, Irish Math. Soc. Bull. (1999), no.~43, 8--63.

\bibitem{kus-tuset:survey2}
J.~Kustermans and L.~Tuset, \emph{A survey of {C*}-algebraic quantum
  groups, part {II}}, Irish Math. Soc. Bull. (2000), no.~44, 6--54.

\bibitem{kustermans-vaes:lcqg}
J.~Kustermans and S.~Vaes, \emph{Locally compact quantum groups}, Ann. Sci.
  {\'E}cole Norm. Sup. (4) \textbf{33} (2000), 837--934.

\bibitem{lau-losert:complemented}
A.~T.-M. Lau and V.~Losert, \emph{Complementation of certain subspaces of
  {$L\sb \infty(G)$} of a locally compact group}, Pacific J. Math. \textbf{141}
  (1990), 295--310.

\bibitem{loynes:idem}
R.~M. Loynes, \emph{Fourier transforms and probability theory on a
  noncommutative locally compact topological group}, Ark. Mat. \textbf{5}
  (1963), 37--42.

\bibitem{pal:idempotent-state}
A.~Pal, \emph{A counterexample on idempotent states on a compact quantum
  group}, Lett. Math. Phys. \textbf{37} (1996), 75--77.

\bibitem{parthasarathy:idem}
K.~R. Parthasarathy, \emph{A note on idempotent measures in topological
  groups}, J. London Math. Soc. \textbf{42} (1967), 534--536.

\bibitem{paulsen:cb-maps}
V.~Paulsen, \emph{Completely bounded maps and operator algebras}, Cambridge
  University Press, Cambridge, 2002.

\bibitem{podles:SU-subgroups}
P.~Podle{\'s}, \emph{Symmetries of quantum spaces. {S}ubgroups and quotient
  spaces of quantum {${\rm SU}(2)$} and {${\rm SO}(3)$} groups}, Comm. Math.
  Phys. \textbf{170} (1995), 1--20.

\bibitem{pym:idem}
J.~S. Pym, \emph{Idempotent measures on semigroups}, Pacific J. Math.
  \textbf{12} (1962), 685--698.

\bibitem{rudin:idem}
W.~Rudin, \emph{Idempotent measures on {A}belian groups}, Pacific J. Math.
  \textbf{9} (1959), 195--209.

\bibitem{rudin:measure-algebras}
W.~Rudin, \emph{Measure algebras on abelian groups}, Bull. Amer. Math. Soc.
  \textbf{65} (1959), 227--247.


\bibitem{salmi:compact-subgroups}
P.~Salmi, \emph{Compact subgroups and left invariant {C}*-subalgebras of locally
  compact quantum groups}, J. Funct. Anal. \textbf{261} (2011), 1--24.

\bibitem{salmi:strict-invariant}
P.~Salmi, \emph{Subgroups and strictly closed invariant {C*}-subalgebras},
  preprint, arXiv:1110.5459.

\bibitem{salmi-skalski:idempotent-states}
P.~Salmi and A.~Skalski, \emph{Idempotent states on locally compact quantum
  groups}, to appear in {\emph{Q. J. Math.}}, arXiv:1102.2051.

\bibitem{takesaki:vol2}
M.~Takesaki, \emph{Theory of operator algebras. {II}}, Springer-Verlag, Berlin,
  2003.

\bibitem{takesaki-tatsuuma:duality-subgroups}
M.~Takesaki and N.~Tatsuuma, \emph{Duality and subgroups}, Ann. of Math. (2)
  \textbf{93} (1971), 344--364.

\bibitem{timmermann:quantum-groups}
T.~Timmermann, \emph{An invitation to quantum groups and duality. {F}rom {H}opf
  algebras to multiplicative unitaries and beyond}, European Mathematical
  Society (EMS), Z{\"u}rich, 2008.

\bibitem{tomatsu:coideals}
R.~Tomatsu, \emph{A characterization of right coideals of quotient type and its
  application to classification of {P}oisson boundaries}, Comm. Math. Phys.
  \textbf{275} (2007), 271--296.

\bibitem{vaes:imprimitivity}
S.~Vaes, \emph{A new approach to induction and imprimitivity results}, J.
  Funct. Anal. \textbf{229} (2005), 317--374.

\bibitem{vaes-vainerman:low-dimensional}
S.~Vaes and L.~Vainerman, \emph{On low-dimensional locally compact quantum
  groups}, Locally compact quantum groups and groupoids ({S}trasbourg, 2002),
  IRMA Lect. Math. Theor. Phys., vol.~2, de Gruyter, Berlin, 2003,
  pp.~127--187.

\bibitem{vandaele:haar-compact}
A.~Van~Daele, \emph{The {H}aar measure on a compact quantum group}, Proc. Amer.
  Math. Soc. \textbf{123} (1995), 3125--3128.

\bibitem{wendel:idem}
J.~G. Wendel, \emph{Haar measure and the semigroup of measures on a compact
  group}, Proc. Amer. Math. Soc. \textbf{5} (1954), 923--929.

\bibitem{woronowicz:compact-matrix-pseudogroups}
S.~L. Woronowicz, \emph{Compact matrix pseudogroups}, Comm. Math. Phys.
  \textbf{111} (1987), 613--665.

\bibitem{woronowicz:SUq}
S.~L. Woronowicz, \emph{Twisted {${\rm SU}(2)$} group. {A}n example of
  a noncommutative differential calculus},
Publ. Res. Inst. Math. Sci. \textbf{23} (1987), 117--181.

\bibitem{woronowicz:manageable}
S.~L. Woronowicz, \emph{From multiplicative unitaries to quantum
  groups}, Internat. J. Math. \textbf{7} (1996), 127--149.

\bibitem{woronowicz:compact-quantum}
S.~L. Woronowicz, \emph{Compact quantum groups}, Sym\'etries quantiques ({L}es
  {H}ouches, 1995), North-Holland, Amsterdam, 1998, pp.~845--884.

\end{thebibliography}
\end{document}